\newcommand{\norm}{{\mathcal{N}}}
\newcommand{\C}{{\mathbb{C}}}
\newcommand{\R}{{\mathbb{R}}}
\newcommand{\Q}{{\mathbb{Q}}}
\newcommand{\Z}{{\mathbb{Z}}}
\newcommand{\N}{{\mathbb{N}}}
\newcommand{\height}{\mathrm{h}}
\newcommand{\gerp}{{\mathfrak{p}}}
\newcommand{\gerq}{\mathfrak{q}}
\newcommand{\eps}{\varepsilon}
\newcommand{\ph}{\varphi}
\newcommand\tho{{\text{th}}}
    \let\@fnsymbol\@alph
\title{On big primitive divisors of Fibonacci numbers}
\author{Haojie Hong}
\date{Version of \today}
\newtheorem{theorem}{Theorem}[section]
\newtheorem{proposition}[theorem]{Proposition}
\newtheorem{lemma}[theorem]{Lemma}
\newtheorem{remark}[theorem]{Remark}
\numberwithin{equation}{section}
\renewcommand*\l@section[2]{%
  \ifnum \c@tocdepth >\z@
    \addpenalty\@secpenalty
    \addvspace{0.2em \@plus\p@}%
    \setlength\@tempdima{1.5em}%
    \begingroup
      \parindent \z@ \rightskip \@pnumwidth
      \parfillskip -\@pnumwidth
      \leavevmode \bfseries
      \advance\leftskip\@tempdima
      \hskip -\leftskip
      #1\nobreak\hfil \nobreak\hb@xt@\@pnumwidth{\hss #2}\par
    \endgroup
  \fi}
\begin{document}

\hfuzz 4.3pt

\maketitle

\renewcommand\thefootnote{}

\begin{abstract}
In this note, we prove that for any given positive integer $\kappa$,
when $n$ is bigger than a constant explicitly depending on $\kappa$, the $n$-th Fibonacci number has a primitive divisor not less than $(\kappa+1) n-1$.
\end{abstract}

{\footnotesize

\tableofcontents

}

\section{Introduction}
Let $\alpha$ and $\beta$ be complex numbers such that $\alpha+\beta$ and $\alpha \beta$ are non-zero coprime integers and $\alpha/\beta$ is not a root of unity. For every $n\in\N$, let \[
u_n = \frac{\alpha^n - \beta^n}{\alpha - \beta}.
\]
One can verify that all $u_n$ are integers and they are called \emph{Lucas numbers}. In particular, if we set $\alpha=(1+\sqrt{5})/2$ and $\beta=(1-\sqrt{5})/2$, the Lucas numbers $(u_n)_{n\ge 0}$ are just \emph{Fibonacci numbers} \[
1,1,2,3,5,8,13,21,34,55,89, 144, 233\cdots,
\] which we denote by $F_n$ in the rest of this note.

A prime number $p$ is called a \emph{primitive divisor} of a Lucas number $u_n$ if $p\mid u_n$ but $p\nmid (\alpha-\beta)^2 u_1\cdots u_{n-1}$. Note that a primitive divisor $p $ of a Lucas number $u_n$ satisfies $p\equiv\pm 1\pmod n$. 
In 1892 Zsigmondy\cite{Zsi1892} proved that $u_n$ has a primitive divisor for $n>6$ when $\alpha, \beta\in\Z$. Later in 1913 Carmichael \cite{Car1913} proved that $u_n$ has a primitive divisor for $n>12$ if $\alpha, \beta\in\R$. According to Carmichael's result, it is clear that $F_n$ has a primitive divisor for $n\ge 7$ and $n\neq 12$.

In this note, we prove that when $n$ is big enough, $F_n$ always has a primitive divisor different from $n\pm 1, 2n\pm 1,\cdots, \kappa n\pm 1$ for fixed $\kappa$. The proofs use an adaptation of Stewart's argument\cite{St13, BHS22}, which use Yu's estimates\cite{Yu13} for $p$-adic logarithmic forms.

Our main results are Theorem \ref{thm_smallkappa} and Theorem \ref{thm_bigkappa}.

\begin{theorem}\label{thm_smallkappa}
Let $\kappa$ be a fixed positive integer and assume $\kappa\le 10^6$. There exists a positive integer $n_0$ depending on $\kappa$, which can be explicitly computed,
 such that for any positive integer $n\ge \exp(n_0)$, the Fibonacci number $F_n$ has a primitive divisor distinct from each of the $2\kappa$ numbers\[
n\pm 1, 2n\pm 1,\cdots, \kappa n\pm 1.
\]
In particular, $F_n$ has a prime divisor greater than or equal to $(\kappa+1)n-1$, when $n\ge \exp(n_0)$.
\end{theorem}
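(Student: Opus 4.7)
The plan is to argue by contradiction in the style of Stewart [St13, BHS22]. Suppose every primitive divisor of $F_n$ lies in the finite set $\calP := \{kn + \varepsilon : 1\le k\le\kappa,\ \varepsilon = \pm 1\}$, which has at most $2\kappa$ elements, and let $F_n^\ast := \prod_{p \text{ primitive}} p^{v_p(F_n)}$ denote the primitive part of $F_n$. Under this hypothesis
\[
\log F_n^\ast \;\le\; \sum_{p \in \calP} v_p(F_n)\log p,
\]
and the idea is to sandwich $\log F_n^\ast$ between a lower bound growing like $\varphi(n)\log\alpha$ and an explicit upper bound extracted from Yu's $p$-adic estimates, concluding that the sandwich is inconsistent as soon as $n$ exceeds an explicit threshold $\exp(n_0(\kappa))$.

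For the lower bound I would exploit the factorization $F_n = \prod_{d\mid n}\Phi_d(\alpha,\beta)$ together with the identity $\Phi_n(\alpha,\beta) = \prod_\zeta(\alpha-\zeta\beta)$, where $\zeta$ ranges over primitive $n$-th roots of unity. Since $|\beta/\alpha|=\alpha^{-2}<1$, a direct estimate yields $\log|\Phi_n(\alpha,\beta)| = \varphi(n)\log\alpha + O(\log n)$. Any non-primitive prime factor of $\Phi_n(\alpha,\beta)$ must divide $n$ and hence contributes only $O(\log n)$ in total, so $\log F_n^\ast \ge \varphi(n)\log\alpha - O(\log n)$.

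For the upper bound I would apply Yu's effective $p$-adic estimate for two-variable linear forms in logarithms [Yu13] to $(\alpha/\beta)^n - 1$ inside $\Q(\sqrt{5})$. Each $p\in\calP$ satisfies $p\le(\kappa+1)n$, so Yu's estimate produces an inequality $v_p(F_n)\log p \le \Psi(p,n)$ with $\Psi$ explicit and, once $p$ is replaced by its upper bound $O(\kappa n)$, polynomial in $\log n$ with coefficients controlled by $\kappa$. Summing over the at most $2\kappa$ primes in $\calP$ yields an upper bound on $\log F_n^\ast$ of the same polynomial-in-$\log n$ shape, and comparing with the lower bound $\varphi(n)\log\alpha - O(\log n)$---while invoking the classical estimate $\varphi(n)\gg n/\log\log n$---forces a contradiction for $n\ge \exp(n_0(\kappa))$, where $n_0(\kappa)$ is read off by inverting the resulting explicit inequality.

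The hardest step is the explicit numerical bookkeeping. Yu's constants are large and depend on the degree of $\Q(\sqrt{5})$ and on the splitting type of $p$ in that field, and Stewart's arrangement must be adapted so that the dependence of $\Psi$ on both $p$ and $n$ is sharp enough to keep $n_0(\kappa)$ finite and usable. This is presumably also where the restriction $\kappa\le 10^6$ enters: beyond that range the threshold obtained from the direct $2\kappa$-fold comparison blows up and a different packaging (the content of Theorem \ref{thm_bigkappa}) becomes necessary. Once the explicit form of Yu's theorem is in place and the non-primitive contributions to $\Phi_n(\alpha,\beta)$ are carefully tracked, the inequality chain should close mechanically and deliver the stated $n_0$, whence the existence of a primitive divisor outside $\calP$---in particular a prime divisor of $F_n$ of size at least $(\kappa+1)n-1$.
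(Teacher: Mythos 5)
Your plan identifies the right general shape of the argument (compare a lower bound on the primitive part of $F_n$, of size $\varphi(n)\log\eta$, against an upper bound extracted from Yu's $p$-adic estimates for the $2\kappa$ candidate primes), and your treatment of the lower bound via cyclotomic polynomials and Schinzel's lemma on non-primitive prime factors matches the paper. However, there is a genuine gap in the upper-bound step.

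You propose applying Yu's estimate for \emph{two-variable} (in effect, one logarithm $\gamma^n-1$) linear forms to each $p\in\calP$. That does not work. Yu's bound in the one-logarithm case gives $\nu_p(F_n)\lesssim p/(\log p)$, so $\nu_p(F_n)\log p\lesssim p\approx jn$ and the sum over $j\le\kappa$ is $\lesssim\kappa^2 n$, i.e.\ \emph{linear} in $n$, not ``polynomial in $\log n$'' as you claim. Comparing $\kappa^2 n$ against the lower bound $\gg n/\log\log n$ yields no contradiction for large $n$: the upper bound wins for all $n$, and no finite $n_0$ comes out. The missing ingredient is exactly Stewart's trick (this is what \cite{St13} contributes and what you need to internalize from the citation): introduce $k-1$ auxiliary primes $q_2,\ldots,q_k$ that split in $\Q(\sqrt5)$, take $\theta_j$ of norm one generating $\gerq_j/\gerq_j^\sigma$, set $\theta_1=\gamma/(\theta_2\cdots\theta_k)$, and rewrite $\gamma^n-1=\theta_1^n\cdots\theta_k^n-1$. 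Applying the \emph{$k$-variable} version of Yu's theorem to this (after checking the Kummer and norm-one conditions) yields $\nu_p(F_n)\lesssim C(k)\,p/(\log p)^{k}$, hence $\nu_p(F_n)\log p\lesssim C(k)\,p/(\log p)^{k-1}$; the extra $(\log p)^{k-1}$ in the denominator is precisely what lets the sum over $p\approx jn$ be $\lesssim C(k)\kappa^2 n/(\log n)^{k-1}$, which for $k\ge 2$ is eventually dominated by $n/\log\log n$. Only then does the comparison close and produce an explicit $n_0(\kappa,k)$; the restriction $\kappa\le 10^6$ is not a failure threshold for the two-variable method but simply the range over which the optimal choice of $k$ is swept numerically in the paper's script (Theorem~\ref{thm_bigkappa} handles larger $\kappa$ by choosing $k\asymp\log\kappa$ analytically).
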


Actually, Stewart's work\cite[Lemma 4.3]{St13} holds particularly for primitive divisors of Fibonacci numbers.  This combined with estimate of the lower bound implies the existence of $n_0$ in the above theorem. However, Stewart does not give an explicit expression for $n_0$. Bilu et al.\cite[Theorem 1.3]{BHS22} gives an explicit expression for such $n_0$, but it is huge numerically. This note is aimed to obtain an explicit and numerically sharp expression for $n_0$.

Suitable values of $n_0$ for certain $\kappa$ can be found in the following Tables 1 and 2. In fact, a SageMath script (see Appendix A1) was made so that we are able to compute value of $n_0$ for any given $1\le \kappa \le 10^6$.

\begin{table}[htbp]
\centering
\begin{tabular}{|c|c|c|c|c|c|c|c|c|c|c|}
\hline
$\kappa$ & $1$ & $2$ & $3$  & $4$ & $5$ &  $6$ &  $7$  & $8$ & $9$ & $10$ \\ \hline
 $n_0$ &  $7607$ & $8006$ & $8257$ & $8443$ & $8588$ & $8710$ & $8815$ & $8904$ &  $8984$ &  $9057$\\ \hline
\end{tabular}\caption{optimal values of $n_0$ for $\kappa = 1,2,\ldots,10$}
\end{table}
\begin{table}[htbp]
\centering
\begin{tabular}{|c|c|c|c|c|c|c|c|c|c|}
\hline
$\kappa$ & $20$ & $30$ & $40$  & $50$ & $100$ &  $1000$ &  $10000$  & $10^5$ & $10^6$ \\ \hline
 $n_0$ &  $9544$ & $9831$ & $10036$ & $10196$ & $10701$ & $12405$ & $14121$ & $15841$ &  $17575$  \\ \hline
\end{tabular}\caption{optimal values of $n_0$ for some other $\kappa $}
\end{table}

\begin{remark}
For larger $\kappa$, say $10^6<\kappa< \exp(250000)$,
although the computation of the lower bound of $n$ may be very slow, it is still possible. In fact, one can just choose $k$ to be small in inequality \eqref{emain} to get an estimate for arbitrary $\kappa$,  even though it may not be optimal.  The next theorem characterizes the situation when $\kappa$ is particularly large.
\end{remark}

\begin{theorem}\label{thm_bigkappa}
Let $\kappa$ be a  positive integer and assume that $\log\kappa\ge 250000$. Then for any positive integer \[
n\ge \exp(143\log\kappa\log\log\kappa),
\]
the Fibonacci number $F_n$ has a primitive divisor distinct from each of the following $2\kappa$ numbers \[
n\pm 1, 2n\pm 1,\cdots, \kappa n\pm 1.
\]
In particular, $F_n$ has a prime divisor greater than or equal to $(\kappa+1)n-1$, when $n\ge  \exp(143\log\kappa\log\log\kappa)$.
\end{theorem}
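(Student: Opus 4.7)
The plan is to follow the Stewart--Yu machinery underlying Theorem \ref{thm_smallkappa} and track the asymptotic dependence on $\kappa$, exploiting the remark that in the large-$\kappa$ regime one can afford to fix the auxiliary parameter $k$ in inequality \eqref{emain} at a small (essentially constant) value. I argue by contradiction: suppose some $n \ge \exp(143\log\kappa\log\log\kappa)$ has the property that every primitive prime divisor of $F_n$ belongs to
\[
S_\kappa := \{\,in\pm 1 : 1 \le i \le \kappa\,\},
\]
so that $|S_\kappa| \le 2\kappa$ and every $p \in S_\kappa$ satisfies $p \le (\kappa+1)n - 1$.

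Let $\Psi_n := \Phi_n^\ast(\alpha,\beta)$ denote the primitive part of $F_n$, with $\alpha = (1+\sqrt5)/2$. On the one hand, the standard cyclotomic estimate (of the kind used throughout Stewart's argument) yields a lower bound
\[
\log \Psi_n \;\ge\; \varphi(n)\log\alpha \;-\; E(n),
\]
where $E(n)$ is an explicit error term of order $O(d(n)\log n)$. On the other hand, the contradiction hypothesis forces $\Psi_n$ to be supported on $S_\kappa$, so
\[
\log \Psi_n \;\le\; \sum_{p \in S_\kappa} v_p(F_n)\log p.
\]
For each $p \in S_\kappa$ I would invoke Yu's $p$-adic linear-forms-in-logarithms estimate \cite{Yu13} applied to $\alpha$ and $\beta$, which is precisely the inequality \eqref{emain} alluded to above. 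This bounds $v_p(F_n)\log p$ by an explicit expression in $\log n$, $\log p$, and the free parameter $k$.

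Summing Yu's bound over the at most $2\kappa$ primes in $S_\kappa$, using $\log p \le \log((\kappa+1)n) \le \log n + \log(2\kappa)$, produces an upper bound on $\log \Psi_n$ of the shape
\[
\log \Psi_n \;\le\; 2\kappa \cdot C(k)\,(\log n)\,\bigl(\log n + \log(2\kappa)\bigr)^{a(k)},
\]
for an explicit $C(k)$ and exponent $a(k)$. Combining this with the lower bound and $\varphi(n)\gg n/\log\log n$ gives an inequality that, once solved for $n$, returns a threshold of the form $n \le \exp(c(k)\log\kappa\log\log\kappa)$ with $c(k)$ computable. The main obstacle is the explicit bookkeeping that pins the constant at $143$: fixing $k$ small (as suggested by the remark preceding the theorem) is enough asymptotically, but one must carefully handle the cross terms coming from $\log\log \kappa$, $\log\log n$, and the cyclotomic error $E(n)$. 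The hypothesis $\log\kappa \ge 250000$ is exactly the threshold at which all these lower-order contributions are dominated by the leading term $143\log\kappa\log\log\kappa$, contradicting the assumed lower bound on $n$ and completing the proof.
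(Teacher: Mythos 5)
Your proposal contains a genuine gap at the single most important step: the choice of the auxiliary parameter $k$. You write that in the large-$\kappa$ regime "one can afford to fix the auxiliary parameter $k$ in inequality \eqref{emain} at a small (essentially constant) value." This misreads the paper's remark (which says only that a small $k$ yields \emph{some} non-optimal bound for intermediate $\kappa$) and, more importantly, it cannot yield the claimed bound $\exp(143\log\kappa\log\log\kappa)$. In the quantity $\Lambda(k,\kappa)$ appearing in \eqref{emain} the dependence on $\kappa$ enters through the factor $\bigl(\kappa(\kappa+1)\bigr)^{1/(k-1)}$. If $k$ is held fixed, this factor is a fixed positive power of $\kappa$, and the resulting bound reads $\log n \ll \kappa^{2/(k-1)+o(1)}$, i.e.\ $\log n$ is bounded by a \emph{power} of $\kappa$, not by $\log\kappa\log\log\kappa$. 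The heart of the paper's proof of Theorem~\ref{thm_bigkappa} is precisely the opposite move: one lets $k$ grow with $\kappa$, taking $k=\lfloor\log(\kappa(\kappa+1))\rfloor$, so that $\bigl(\kappa(\kappa+1)\bigr)^{1/(k-1)}\asymp e$ is bounded. With this choice $\Lambda(k,\kappa)\ll k\log k \ll \log\kappa\log\log\kappa$, which is what produces the stated threshold.

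Letting $k$ grow, however, introduces a second difficulty that your sketch does not address: one must control $\Theta_k$ (which involves the product $\prod_{j\le k}\log(\eta q_j)$ over the first $k-1$ split primes $q_j$) as $k\to\infty$. The paper does this via Lemma~\ref{qkupperbtre}, namely $\eta q_k<k^{1.3}$ for $k\ge 500000$, which in turn uses an explicit Chebyshev-type bound for primes in the arithmetic progressions mod $5$. Without such a control, the factors $\Theta_k^{1/(k-1)}$ and the Stirling-type ratio $\bigl((k+1)^{k+2}/(k-1)!\bigr)^{1/(k-1)}$ cannot be shown to stay bounded (they are what cancels one power of $k$ against the explicit prefactor $7k(k+1)$). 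Finally, a smaller point: in your sketch you describe applying Yu's estimate "to $\alpha$ and $\beta$" for each $p$; the actual mechanism (Proposition~\ref{prorder}) applies Yu's theorem to the $k$ multiplicatively independent auxiliary elements $\theta_1,\ldots,\theta_k$ built from split primes, which is exactly what gives the crucial factor $(\log p)^{-k}$ and makes the whole argument viable. As written, your proposal identifies the correct overall framework but misses the two ideas (growing $k$ tuned to $\kappa$, and Lemma~\ref{qkupperbtre} to tame $\Theta_k$) that make the bound $143\log\kappa\log\log\kappa$ attainable.
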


\section{Notation and preliminaries}
Let $K$ be the filed $\Q(\sqrt{5})$. We denote by~$\sigma$  the non-trivial automorphism of~$K$ and by $\norm$ the $K/\Q$-norm. 

We denote 
$$
\eta:=\frac{1+\sqrt5}{2},\qquad  \gamma:=\frac\eta{\eta^\sigma}=-\eta^2. 
$$
Then
$$
F_n= \frac{\eta^n-(\eta^\sigma)^n}{\sqrt5}=\frac{(\eta^\sigma)^n}{\sqrt5}(\gamma^n-1). 
$$
Note that~$\eta$ is the fundamental unit of~$K$. 

Let ${p\ne 5}$ be a prime number, and~$\gerp$ be a prime of~$K$ above~$p$. We denote by ${f=f_p}$ the residual degree of~$\gerp$ (it depends only on~$p$, not on~$\gerp$). Note that~$p$ splits in~$K$ if ${p\equiv \pm1\pmod 5}$ and is inert if ${p\equiv \pm2\pmod5}$. Hence
\begin{equation}
\label{efp}
f_p=
\begin{cases}
1, &\text{if ${p\equiv \pm1\pmod 5}$},\\
2, &\text{if ${p\equiv \pm2\pmod 5}$}. 
\end{cases}
\end{equation}

We denote by $\height(x)$ the usual absolute logarithmic height of $x\in K$: \[
\height(x) = [K: \Q]^{-1}\sum_{v\in M_K}[K_v: \Q_v]\log^+|x|_v,
\]
where $\log^+(x):= \max\{\log(x), 0\}$.
We have 
\begin{equation}
\label{ehega}
\height(\gamma)=\log\eta. 
\end{equation}
Note also the lower bound 
\begin{equation}
\label{ehegeta}
\height(\alpha)\ge \frac12\log \eta \qquad (\alpha \in K^\times \smallsetminus\{1,-1\}). 
\end{equation}
Indeed, if~$\alpha$ is not a Dirichlet unit, then for some prime number~$p$ we have 
${\height(\alpha) \ge (1/2)\log p}$, even better than~\eqref{ehegeta}.
And if~$\alpha$ is a unit, then~\eqref{ehegeta} clearly holds, because~$\eta$ is the fundamental unit. 

We also need the following elementary lemma.

\begin{lemma}
\label{lna}
Let $\eps, A$  be real numbers satisfying ${0<\eps\le 1}$ and $ {A\ge e^{2\eps}\eps^{-\eps}} $. Let ${x> 0}$ satisfy 
 ${x\le A(\log x)^\eps}$. Then 
\begin{equation}
\label{ealoga}
x<  A(2\log A)^\eps
\end{equation}

\end{lemma}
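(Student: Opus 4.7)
The plan is to pass to logarithms. Set $y=\log x$; since $x>0$ and $x\le A(\log x)^\eps$ force $(\log x)^\eps>0$, one has $y>0$. The hypothesis becomes
\[
g(y)\le \log A,\qquad g(t):=t-\eps\log t,
\]
and the desired conclusion $x<A(2\log A)^\eps$ is equivalent to $y<y^*:=\log A+\eps\log(2\log A)$. So the entire lemma reduces to a study of the single-variable function $g$.

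The key structural fact is that $g$ decreases on $(0,\eps)$ and increases on $(\eps,\infty)$, with minimum value $\eps(1-\log\eps)$ at $t=\eps$. The hypothesis $A\ge e^{2\eps}\eps^{-\eps}$ gives $\log A\ge 2\eps-\eps\log\eps>\eps(1-\log\eps)$, so the sublevel set $\{t>0:g(t)\le\log A\}$ is a closed interval $[y_-,y_+]$, and the hypothesis is just $y\in[y_-,y_+]$. A direct check shows $y^*>\eps$, so it will be enough to prove $g(y^*)>\log A$: monotonicity of $g$ on $(\eps,\infty)$ then forces $y_+<y^*$, whence $y\le y_+<y^*$.

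To finish I would rewrite $g(y^*)>\log A$ in closed form:
\[
g(y^*)-\log A=\eps\log\!\bigl((2\log A)/y^*\bigr),
\]
so what is needed is $\log A>\eps\log(2\log A)$. Setting $B=\log A$, the map $B\mapsto B-\eps\log(2B)$ has positive derivative on $(\eps,\infty)$, and $B\ge 2\eps-\eps\log\eps>\eps$, so it suffices to verify the inequality at the boundary $B=2\eps-\eps\log\eps$. Writing $u=-\log\eps\ge 0$, this reduces after expansion to $2+2u-\log 2>\log(2+u)$, which is immediate from $\log(2+u)\le\log 2+u/2$.

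There is no deep obstacle; the one point that needs care is seeing why the precise hypothesis $A\ge e^{2\eps}\eps^{-\eps}$ is the correct one. It is calibrated exactly to make the scalar inequality $\log A>\eps\log(2\log A)$ valid, and that inequality is in turn exactly what upgrades the a priori bound $y\le\log A+\eps\log y$ to the desired $y<\log A+\eps\log(2\log A)$ in a single step, rather than leaving a loose multiplicative constant in front of $\log A$.
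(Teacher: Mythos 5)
Your proof is correct, and it takes a genuinely different route from the paper's. After passing to $y=\log x$ you analyze the function $g(t)=t-\eps\log t$ directly: its sublevel set $\{g\le\log A\}$ is a compact interval $[y_-,y_+]$, you check $y^*:=\log A+\eps\log(2\log A)>\eps$ and $g(y^*)>\log A$, and monotonicity of $g$ on $(\eps,\infty)$ pushes $y\le y_+<y^*$. The final reduction to the scalar inequality $\log A>\eps\log(2\log A)$ and its verification at the boundary $B_0=2\eps-\eps\log\eps$ via $\log(1+u/2)\le u/2$ is all sound. The paper instead first settles the case $\eps=1$ by a short contradiction argument (if $x\ge 2A\log A$ then $x/\log x>A$, using $2A\log A<A^2$ for $A\ge e^2$) and then reduces general $\eps$ to $\eps=1$ by rewriting the hypothesis as $x^{1/\eps}\le (A^{1/\eps}\eps)\log(x^{1/\eps})$, noting $A^{1/\eps}\eps\ge e^2$; this is shorter, but the power substitution is the kind of trick one either sees or does not. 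Your version is longer but more transparent about why the threshold $A\ge e^{2\eps}\eps^{-\eps}$ is the right one: it is exactly what places $\log A$ above the minimum of $g$ and makes $B\mapsto B-\eps\log(2B)$ positive at the boundary, which is the content of the lemma once recast on the logarithmic scale.
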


\begin{proof}
Assume that ${\eps=1}$. Then we have ${A\ge e^{2}}$ and ${x\le A\log x}$. For ${A\ge e^{2}}$ we have ${2A\log A<A^2}$. 
Hence, assuming that ${x\ge 2A\log A}$ we obtain ${x/\log x >A}$, a contradiction. This proves the result for ${\eps=1}$. The general case reduces to the case just proved: rewriting the hypothesis as 
$
{x^{1/\eps}\le A^{1/\eps}\eps\log(x^{1/\eps})} 
$  and noting that ${A^{1/\eps}\eps \ge e^2}$, 
we obtain 
$$
x^{1/\eps}\le 2A^{1/\eps}\eps\log(A^{1/\eps}\eps) = 2A^{1/\eps}(\log A+ \eps \log\eps) \le 2A^{1/\eps}\log A, 
$$ 
which is~\eqref{ealoga}. 
\end{proof}

\section{The Theorem of Yu}

The following is an adaptation of (a special case of) the Main Theorem from~\cite{Yu13}. 

\begin{theorem}
\label{thyu}
Let ${k\ge 8}$ be an integer and~$\gerp$ a prime of~$K$ with underlying rational prime~$p$ satisfying 
\begin{equation}
\label{etechnical}
p\ge e^{3k}k^3. 
\end{equation} 
Let ${\alpha_1, \ldots, \alpha_k\in K^\times}$ be multiplicatively independent $\gerp$-adic units,  satisfying the Kummer condition 
\begin{equation}
\label{ekummer}
\bigl[K(\sqrt{\alpha_1}, \ldots, \sqrt{\alpha_k}):K\bigr]=2^k.
\end{equation}
Assume also that they are all of norm~$1$:
\begin{equation}
\label{enormone}
\norm\alpha_j=1 \qquad(j=1, \ldots, k). 
\end{equation}
The for  ${b_1, \ldots, b_k \in \Z\smallsetminus\{0\}}$  we have 
\begin{equation}
\label{eyu}
\nu_\gerp\bigl(\alpha_1^{b_1}\cdots \alpha_k^{b_k}-1\bigr) \le C_0 \max\{\log B, (k+1)f_p\log p\}\height(\alpha_1)\cdots \height(\alpha_k), 
\end{equation}
where
\begin{align*}
C_0&:=3588.1 \frac{(14k)^k(k+1)^{k+1}}{k!}\frac{p}{(f_p\log p)^{k+1}},\\
B&:=\max\{|b_1|, \ldots, |b_k|\}. 
\end{align*}
\end{theorem}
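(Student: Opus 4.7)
The strategy is to derive the bound directly from Yu's Main Theorem in \cite{Yu13}, instantiated for the specific number field $K = \Q(\sqrt{5})$ (so degree $d=2$), and then to simplify the resulting inequality using the norm-one hypothesis \eqref{enormone} together with the technical lower bound \eqref{etechnical}. The theorem as stated here is, modulo bookkeeping, just a cosmetic repackaging of Yu's bound in which all dependence on $d$ has been collapsed and the correction terms that Yu carries in the form of ``modified heights'' have been absorbed into the denominator of the prefactor~$C_0$.

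First I would recall the shape of Yu's theorem: for a number field $F$ of degree $d$, a prime~$\gerp$ over $p$ with residue degree $f$, multiplicatively independent $\gerp$-adic units $\alpha_1,\ldots,\alpha_k$ satisfying the Kummer condition, and nonzero integers $b_j$, one has a bound of the form
\[
\nu_\gerp\bigl(\alpha_1^{b_1}\cdots\alpha_k^{b_k}-1\bigr)\;\le\; C(k,d)\,\frac{p^{f}}{(f\log p)^{k+2}}\,\max\{\log B,\, cf\log p\}\prod_{j=1}^k h'(\alpha_j),
\]
where $h'(\alpha_j) := \max\bigl(h(\alpha_j),\, f\log p/d\bigr)$ is Yu's modified height and $C(k,d)$ is an explicit combinatorial constant of the form $(\text{const})^k \cdot d^{O(k)} \cdot k^{O(k)}/k!$. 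Setting $d=2$ kills the $d$-dependence and converts $f\log p/d$ into $f_p\log p/2$; then the bound $h'(\alpha_j)\ge f_p\log p/2$ is used on exactly one of the $k+1$ height-like factors (one of the $h'$ in the product, combined with the $f\log p$ already present in Yu's prefactor) to convert the tail $\prod h'(\alpha_j)$ into the clean product $\prod h(\alpha_j)$ at the price of inserting an extra $f_p\log p$ into the denominator. This produces precisely the shape $p/(f_p\log p)^{k+1}$ seen in~$C_0$. The norm-one assumption~\eqref{enormone} is what permits this consolidation over both primes of $K$ above $p$ in the case $f_p=1$, since $\sigma(\alpha_j)=\alpha_j^{-1}$ identifies the information at $\gerp$ with that at $\gerp^\sigma$.

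The hypothesis \eqref{etechnical} enters in the final clean-up: once $p \ge e^{3k}k^3$, one has $\log p \ge 3k+3\log k$, which is enough to absorb the remaining auxiliary terms of Yu's estimate (things like $(\log k)^k$, $(k\log p)^{O(1)}$, and residues of Stirling approximations) into the numerical constant without disturbing the leading $(14k)^k(k+1)^{k+1}/k!$ shape. The main obstacle is therefore not conceptual but purely arithmetical: Yu's theorem is stated with a baroque explicit constant, and to land on the specific numbers $3588.1$ and $14$ (and the precise exponents $k$ and $k+1$) one has to track every factor carefully through the specializations $d=2$, $h'\mapsto h$, and the absorption via \eqref{etechnical}. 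Making sure that no factors are wasted is what forces the mild hypotheses $k\ge 8$ and \eqref{etechnical}; the former gives room for Stirling, and the latter gives room to simplify $\log p + O(\log k) \le (1+o(1))\log p$ with a clean implicit constant.
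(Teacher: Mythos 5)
Your high-level narrative (specialize Yu's Main Theorem to $K = \Q(\sqrt5)$ with $d=2$, exploit the norm-one hypothesis and the technical lower bound on $p$ to simplify the constant) is the right strategy, and it is indeed the route the paper takes. However, two of your key technical claims about \emph{how} the hypotheses are used are wrong, and these are not cosmetic: they are exactly the points a careful reader would probe.

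First, you say the norm-one assumption~\eqref{enormone} ``permits consolidation over both primes of $K$ above $p$ in the case $f_p=1$.'' This is backwards. The norm-one hypothesis is used to bound from below the parameter~$\delta$ appearing in Yu's constant (his equation (1.8)); it yields ${\delta\ge 1}$ in general and, crucially, ${\delta\ge p-1}$ precisely when~$p$ is \emph{inert} (${f_p=2}$). That bound is what tames the numerator $p^{f_p}=p^2$ in the inert case, giving ${p^{f_p}/\delta \le (p/(p-1))\,p}$ and hence the clean linear~$p$ in~$C_0$. In the split case (${f_p=1}$) one has $p^{f_p}/\delta \le p$ already with only ${\delta\ge1}$, so norm one does no work there. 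Your picture of ``identifying information at~$\gerp$ and~$\gerp^\sigma$'' is not what is going on, and it misses the inert case entirely, which is where the hypothesis earns its keep.

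Second, the role of ${k\ge 8}$ and~\eqref{etechnical} in the actual computation is more pointed than ``room for Stirling'' and ``absorbing $(\log k)^k$-type terms.'' Stirling does not appear; $k\ge 8$ is used for elementary inequalities such as ${k^2 > 6(k+\log k)}$ in verifying that $p/(f_p\log p)^{k+1}>e^k/k^k$ (so that the correct branch of Yu's $\max$ is taken), while~\eqref{etechnical} is used in at least three distinct places: to force $\max\{\log(2e^4(k+1)),\,f_p\log p\}=f_p\log p$, to bound $((p-1)/(p-2))^k$, and to show ${f_p\log p > k'+\log k'}$ so that $h^{(1)}\le\max\{\log B,(k+1)f_p\log p\}$. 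Your account of the modified-height bookkeeping also does not match: the denominator $(f_p\log p)^{k+1}$ does not come from trading one modified height, it is already present in Yu's formula for~$C_1$ after the substitutions. So while the overall architecture you describe is correct, the proposal as written would not withstand scrutiny on the specific mechanism by which norm one and the technical bound on~$p$ are deployed.
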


\begin{proof}
We use the Main Theorem from~\cite{Yu13} 
with~$n$ replaced by~$k$. In our special case we have ${d=2}$ and 
$$
q=2, \qquad e_\gerp=1, 
$$
because ${p\ge 7}$. Furthermore, in our case we have
$$
\omega(d)= \frac{1}{2(\log6)^3}, \qquad  q^u=2, 
$$
where $\omega(d)$ is defined in \cite[equation (1.15)]{Yu13}, and  $q^u$ is defined therein in the paragraph between the displayed equations~1.3) and~(1.4).

In our case, parameters ${a^{(1)}, c^{(1)}, a_0^{(1)}, a_1^{(1)}, a_2^{(1)}}$, defined in Section~1.3 of~\cite{Yu13},  have the following values:
\begin{align*}
a^{(1)}&=7\frac{p-1}{p-2}, & c^{(1)}&= 1794, & a_0^{(1)}&=2+\log7, \\ 
a_1^{(1)}&=4.71, & a_2^{(1)}&=4.71+\log2. 
\end{align*}
Substituting all this to the definition of~$C_1$ and $h^{(1)}$ in \cite[equations (1.9) and (1.13)]{Yu13}, we obtain 
\begin{align}
C_1&= 1794 \left(7\frac{p-1}{p-2}\right)^k \frac{k^k(k+1)^{k+1}2^{k+2}}{k!\cdot 2f_p\log p} \nonumber\\
\label{emaxes}
&\hphantom{=}\times\max \left\{\frac{p^{f_p}}{\delta(f_p\log p)^{k+1}}, \frac{e^k}{k^k}\right\} 
\max\{\log(2e^4(k+1)), f_p\log p\}, \\
h^{(1)}&=\max\left\{\log\frac{B'}{2(\log6)^3},\log B^\circ,  (k+1)(k'+\log k'), (k+1)f_p\log p \right\}, 
\end{align}
where 
\begin{align*}
B'&:= \max_{1\le j\le n}\left\{\frac{b_n}{\height(\alpha_j)}+ \frac{b_j}{\height(\alpha_n)}\right\},& 
B^\circ& :=\min\{|b_1|, \ldots, |b_n|\}, \\ 
k'&:= (2+\log7)k+ 4.71+\log2,  
\end{align*}
and~$\delta$ is defined in equation~(1.8) of~\cite{Yu13} (here we use the fact the the Kummer condition~\eqref{ekummer} is satisfied). Note that  
\begin{equation}
\label{edel}
\delta\ge 
\begin{cases}
1 & \text{in any case}\\
p-1, & \text{if ${f_p=2}$}, 
\end{cases}
\end{equation}
as follows from our assumption~\eqref{enormone}.

We may assume, by renumbering,  that 
$$
\nu_p(b_k) \le \nu_p(b_i) \qquad (i=1, \ldots, k-1). 
$$
Then the Main Theorem of~\cite{Yu13} implies that 
$$
\nu_\gerp\bigl(\alpha_1^{b_1}\cdots \alpha_k^{b_k}-1\bigr) \le C_1 h^{(1)}\height(\alpha_1)\cdots \height(\alpha_k). 
$$
To deduce~\eqref{eyu}, we only have to prove that 
$$
C_1\le C_0, \qquad h^{(1)} \le \max\{\log B, (k+1)f_p\log p\}. 
$$
Condition~\eqref{etechnical} implies that that second $\max$ in~\eqref{emaxes} is $f_p\log p$. 
Hence
\begin{equation}
\label{econe}
C_1= 3588 \left(14\frac{p-1}{p-2}\right)^k \frac{k^k(k+1)^{k+1}}{k!} 
\max \left\{\frac{p^{f_p}}{\delta(f_p\log p)^{k+1}}, \frac{e^k}{k^k}\right\}. 
\end{equation}
Using~\eqref{etechnical} and the assumption ${k\ge 8}$,  we find that 
\begin{equation*}
\left(\frac{p-1}{p-2}\right)^k \le \left(1+\frac{1}{e^{3k}k^3-2}\right)^k <e^{1/(64e^{24}-1/4)}. 
\end{equation*}
Next, using~\eqref{etechnical} and~\eqref{edel}, we obtain 
$$
\frac{p^{f_p}}{\delta} \le \frac{p}{p-1}p \le \frac{512e^{24}}{512e^{24}-1} p. 
$$
Finally, let us show that
\begin{equation}
\frac{p}{(f_p\log p)^{k+1}}>\frac{e^k}{k^k}. 
\end{equation}
The function ${x\mapsto x/(\log x)^{k+1}}$ is increasing for ${x\ge e^{k+1}}$. Hence~\eqref{etechnical} implies that 
\begin{align*}
\frac{p}{(f_p\log p)^{k+1}}\left/\frac{e^k}{k^k}\right. & \ge \frac{e^{3k}k^3}{(6k+6\log k)^{k+1}}\left/\frac{e^k}{k^k}\right.\\
& =   \left(\frac{e^2}{6}\right)^k\left(1+\frac{\log k}{k}\right)^{-k}\frac{k^3}{6(k+\log k)}\\
& \ge   \left(\frac{e^2}{6}\right)^k\frac{k^2}{6(k+\log k)} >1, 
\end{align*}
because ${k^2>6(k+\log k)}$ when ${k\ge 8}$. 

Since 
$$
\frac{3588\cdot e^{1/(64e^{24}-1/2)}\cdot 512e^{24}}{{512e^{24}-1}}<3588.1,
$$ 
we have ${C_1<C_0}$. 

Next, it follows from~\eqref{ehegeta} that
${B' \le 4B/\log \eta}$, 
which implies that 
$$
\frac{B'}{2(\log6)^3} < B.
$$ 
We also have trivially ${B^\circ\le B}$. Hence the first two terms in the $\max$ defining $h^{(1)}$ do not exceed $\log B$. 
Finally, using~\eqref{etechnical}, it is easy to prove that ${f_p\log p > k'+\log k'}$. Hence ${h^{(1)} \le \max\{\log B, (k+1)f_p\log p\}}$. The theorem is proved. 
\end{proof}

\section{Primes split in $K$}
\label{sesplit}
A prime number~$q$ splits in~$K$ if and only if ${q\equiv \pm1\pmod 5}$. 
We denote by ${q_2,q_3, q_4, \ldots}$ the sequence of prime numbers split in~$K$ (it will be convenient to start the numbering from~$2$):
$$
q_2=11, \quad q_3=19, \quad q_4=29, \quad q_5=31, \ldots. 
$$
Since~$q_k$ splits in~$K$, it factorizes in~$K$ as ${q_k=\gerq_k\gerq_k^\sigma}$. Since the field~$K$ is of class number~$1$, the fractional ideal $\gerq/\gerq^\sigma$ is principal.  Let ${\theta_k\in K^\times}$ be its generator. Then ${\norm(\theta_k)=\pm1}$. Replacing, if necessary, $\theta_k$ by $\theta_k\eta$, we may assume that 
\begin{equation}
\label{enormonetheta}
\norm(\theta_k)=1. 
\end{equation}
Replacing~$\theta_k$ by $\theta_k\eta^{2m}$ with a suitable integer~$m$, we may assume that 
$$
\eta^{-1}\le |\theta_k|, |\theta_k^\sigma|\le \eta. 
$$
It follows that 
\begin{equation}
\label{ehethek}
\height(\theta_k) \le \frac12\log(q_k\eta) \qquad (k=2,3,\ldots). 
\end{equation}
We also have the ``Kummer property'':
\begin{equation}
\label{ekum}
[K(\sqrt{\gamma}, \sqrt{\theta_2}, \ldots, \sqrt{\theta_k}):K]=2^k \qquad (k=2,3, \ldots). 
\end{equation}
Indeed, we have clearly ${[K(\sqrt\gamma):K]=2}$. Next, for every~$k$, the prime~$\gerq_k$ ramifies in the field $K(\sqrt{\theta_k})$, but not in $K(\sqrt\gamma)$ and $K(\sqrt{\theta_j})$ for ${j\ne k}$. Hence ${\sqrt{\theta_k}\notin K(\sqrt\gamma, \sqrt{\theta_2}, \ldots, \sqrt{\theta_{k-1}})}$, and~\eqref{ekum} follows by induction in~$k$. 

The following lemma will be used in the proof of theorem \ref{thm_bigkappa}.

\begin{lemma}\label{qkupperbtre}
For $k\ge 500000$, $\eta q_k<k^{1.3}$.
\end{lemma}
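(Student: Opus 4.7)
My plan is to convert the statement into a lower bound on the counting function of primes split in $K$, and then apply an explicit prime number theorem for arithmetic progressions modulo $5$. Set
\[
\pi_5(x)\ :=\ \#\{\,p\le x : p\equiv \pm 1\pmod 5\,\}.
\]
By the indexing $q_2<q_3<\cdots$ of Section~\ref{sesplit} (which starts at $q_2=11$, the first split prime), one has $\pi_5(q_k)=k-1$. Hence the desired $\eta q_k<k^{1.3}$ is equivalent to
\[
\pi_5(k^{1.3}/\eta)\ \ge\ k,
\]
since this forces $q_{k+1}\le k^{1.3}/\eta$ and therefore $q_k<q_{k+1}\le k^{1.3}/\eta$.

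The key input would be an explicit PNT bound of the shape
\[
\pi_5(x)\ \ge\ \frac{x}{2\log x}\Bigl(1+\tfrac{c_1}{\log x}\Bigr)\qquad (x\ge x_0),
\]
for explicit constants $c_1>0$ and $x_0$, obtainable for instance from the effective estimates of Bennett, Martin, O'Bryant and Rechnitzer on $\theta(x;5,a)$ via partial summation. Since $k\ge 500000$ gives $x=k^{1.3}/\eta\ge 1.58\cdot 10^{7}$, such a bound applies throughout.

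Substituting $\log x = 1.3\log k - \log \eta$ reduces the required inequality to a single elementary estimate in $k$, roughly
\[
\frac{k^{0.3}}{2\eta(1.3\log k-\log\eta)}\Bigl(1+\frac{c_1}{1.3\log k-\log\eta}\Bigr)\ \ge\ 1.
\]
I would verify this by direct computation at $k=500000$ and then extend to all $k\ge 500000$ by monotonicity of the left-hand side; the dominant factor $k^{0.3}/\log k$ is increasing for $k\ge e^{10/3}$, which is comfortably below $5\cdot 10^{5}$.

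The main obstacle is numerical sharpness: the leading-order estimate $\pi_5(x)\sim x/(2\log x)$ alone is \emph{not} sufficient at $k=500000$. Indeed $500000^{0.3}\approx 51.3$ while $2\eta\cdot 1.3\log 500000\approx 55.2$, so one is forced to retain the $1/\log x$ correction from the explicit PNT in arithmetic progressions in order to close the small gap. This tightness is precisely what determines the threshold $500000$ appearing in the statement of the lemma.
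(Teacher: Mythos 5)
Your approach is genuinely different from the paper's, but as written it has a real gap. The paper splits the range: for $k>80\,802\,434$ (so $q_k>3\,375\,517\,771$) it invokes the \emph{crude} lower bound $\pi(q_k;5,a)>q_k/(4\log q_k)$ from BMOR, obtains $k-1>q_k/(2\log q_k)$, and concludes by monotonicity of $x\mapsto x^{0.3}-\eta(2\log x)^{1.3}$; the remaining range $500000\le k\le 80\,802\,434$ is checked directly in SageMath. You instead propose to cover \emph{all} $k\ge 500000$ analytically, replacing the crude bound by one with a second-order term $\pi_5(x)\ge\frac{x}{2\log x}(1+c_1/\log x)$. You correctly observe that the leading term alone only barely fails at $k=500000$, and that this is why the correction is needed.

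The gap is that the success of your argument hinges entirely on the values of $c_1$ and $x_0$, which you do not supply. Plugging in $x=k^{1.3}/\eta\approx 1.58\times 10^7$ at $k=500000$, your required inequality $\frac{x}{2\log x}(1+c_1/\log x)\ge k$ unwinds to roughly $c_1\ge 0.73$. Equivalently, the error $|\pi_5(x)-\tfrac12\Li(x)|$ must already be below about $0.135\,x/(\log x)^2$ for $x$ as small as $1.58\times 10^7$. You have not verified that any explicit result in the literature (BMOR or otherwise) delivers a constant this sharp at a threshold this low; note that the BMOR theorem the paper actually cites gives only the crude bound, and the paper deliberately applies it only for $q_k\gtrsim 3.4\times 10^9$ and falls back to computation below that, which strongly suggests the analytic input is not conveniently available near $10^7$. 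Until you exhibit specific $c_1,x_0$ with the required magnitudes and check the resulting inequality at $k=500000$ (and fix up the monotonicity argument for the full expression, not just for $k^{0.3}/\log k$), the proof is incomplete; if the available constants are weaker than you need, you are forced back to a computational range check, i.e., to the paper's strategy.
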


\begin{proof}
Assume first that $k>80802434$. Then $q_k>3375517771 = q_{80802434}$. By Theorem 1.4 of \cite{BMOR18}, we have
\[
\pi(q_k; 5, a) > \frac{q_k}{4\log q_k}\quad (a=1,2,3,4),
\]
where $\pi(x; m,a)$ counts primes $p\le x$ satisfying $p\equiv a\pmod m$.
Hence \[
k-1 = \pi(q_k; 5, 1) + \pi(q_k; 5, 4) > \frac{q_k}{2\log q_k}.
\]
Since the function $x\mapsto x^{0.3}-\eta (2\log x)^{1.3}$ is increasing when $x>76$, 
this implies that\[
k^{1.3} > \left(
\frac{q_k}{2\log q_k}
\right)^{1.3} >\eta q_k.
\]
It remains to verify the statement for \[
500000\le k \le 80802434.
\]This can be done by  computer algebra system, for example, SageMath\cite{sagemath}. For the script used, see Appendix A2.
\end{proof}

\section{Cyclotomic polynomial and primitive divisors}
\label{sprim}

Let $\Phi_n(X)$ denote the $n^\tho$ cyclotomic polynomial. To start with, let us show that for ${z\in \C}$ and ${r>0}$ satisfying  ${|z|\le r<1}$ we have 
\begin{equation}
\label{ephiup}
\bigl|\log|\Phi_n(z)|\bigr| \le \frac{|\log(1-r)|}{1-r}\frac{|z|}{r}. 
\end{equation}
Indeed, for ${|z|\le r<1}$ the Schwarz Lemma implies that 
$$
\bigl|\log|1+z|\bigr| \le |\log(1+z)| \le \frac{|\log(1-r)|}{r}|z|. 
$$
Hence 
\begin{align*}
\bigl|\log|\Phi_n(z)|\bigr|&= \left|\sum_{m\mid n}\mu\left(\frac nm\right)\log|1-z^m|\right|\\
&\le \frac{|\log(1-r)|}{r}\sum_{m=1}^\infty |z|^m
=  \frac{|\log(1-r)|}{1-|z|}\frac{|z|}{r}, 
\end{align*}
which proves~\eqref{ephiup}. 

Applying~\eqref{ephiup} with ${z=r=\gamma^{-1}}$, we obtain 
$$
\bigl|\log\Phi_n(\gamma^{-1})\bigr|\le  \frac{|\log(1-\gamma^{-1})|}{1-\gamma^{-1}}<0.24. 
$$

Next, if ${\beta\in K^\times}$ then 
$$
\sum_\gerp\nu_\gerp(\beta)\log\norm\gerp = \log|\beta|+\log|\beta^\sigma|, 
$$
the sum being over the primes of~$K$. Applying the above identity with ${\beta=\Phi_n(\gamma)}$, and noting that ${\gamma^\sigma=\gamma^{-1}}$, we obtain 
\begin{align}
\label{esum}
\sum_\gerp\nu_\gerp(\Phi_n(\gamma))\log\norm\gerp &= \log|\Phi_n(\gamma)|+\log|\Phi_n(\gamma^{-1})|\\
& =\ph(n)\log|\gamma|+2\log|\Phi_n(\gamma^{-1})|\nonumber\\ 
&= \ph(n)\log|\gamma|+O_1(0.48), \nonumber
\end{align}
where ${O_1(\cdot)}$ is the quantitative version of the familiar $O(\cdot)$ notation: ${A=O_1(B)}$ means that ${|A|\le B}$.

Call a $K$-prime~$\gerp$ primitive divisor of ${\gamma^n-1}$ if 
$$
\gerp\nmid 5, \qquad \nu_\gerp(\gamma^n-1) \ge 1, \qquad \nu_\gerp(\gamma^m-1)=0 \quad (m=1, \ldots, n-1). 
$$
Clearly,~$\gerp$ is a primitive divisor of ${\gamma^n-1}$ if and only if  the underlying rational prime~$p$ is  a primitive divisor of~$F_n$. 

Let~$\gerp$ be not a primitive divisor of ${\gamma^n-1}$. A famous result of Schinzel \cite[Lemma~4]{Sc74} (see also \cite[Lemma~4.5]{BL21}) implies that ${\nu_\gerp(\Phi_n(\gamma)) \le \nu_p(n)}$ when ${n\ne 6}$. Restricting the sum in~\eqref{esum} to primitive divisors of ${\gamma^n-1}$,  we obtain  
\begin{equation*}
\sum_{\text{$\gerp$ primitive}}\nu_\gerp(\Phi_n(\gamma))\log\norm\gerp \ge \ph(n)\log|\gamma|-2\log n-0.48 \qquad (n\ne 6). 
\end{equation*}
We want to make this more convenient to use. Theorem~15 of~\cite{RS62} implies that, for ${n\ge e^{100}}$, we have 
$$
\ph(n) \ge 0.526\frac{n}{\log\log n} . 
$$
Hence
\begin{equation*}
\sum_{\text{$\gerp$ primitive}}\nu_\gerp(\Phi_n(\gamma))\log\norm\gerp \ge  0.5\frac{n}{\log\log n} \qquad (n\ge e^{100}). 
\end{equation*}
Now let us  express this in terms of the primitive divisors of~$F_n$ rather than ${\gamma^n-1}$. For any prime~$\gerp$ of~$K$ we have ${\nu_\gerp(F_n)=\nu_\gerp(\gamma^n-1)}$. Hence 
\begin{align*}
\sum_{\text{$p$ primitive}}\nu_p(F_n)\log p
&=\frac12\sum_{\text{$\gerp$ primitive}}\nu_\gerp(F_n)\log\norm\gerp \\
&=\frac12\sum_{\text{$\gerp$ primitive}}\nu_\gerp(\gamma^n-1)\log\norm\gerp \\
&\ge \frac12 \sum_{\text{$\gerp$ primitive}}\nu_\gerp(\Phi_n(\gamma))\log\norm\gerp, 
\end{align*}
which implies that
\begin{equation}
\label{eliou}
\sum_{\text{$p$ primitive}}\nu_p(F_n)\log p \ge 0.25\frac{n}{\log\log n} \qquad (n\ge e^{100}). 
\end{equation}

\section{Stewart's argument}

Stewart's argument gives a non-trivial lower estimate for ${\nu_\gerp(\gamma^n-1)}$. Let ${k\ge 2}$ be an integer, and ${q_2, \ldots, q_k}$ the first ${k-1}$ primes split in~$K$, see Section~\ref{sesplit}.  Define
\begin{equation}
\label{elaka}
\Theta_k:= \bigl((k+1)\log\eta+\log(q_2\cdots q_k)\bigr)\prod_{j=2}^k\log(\eta q_j). 
\end{equation}

\begin{proposition}
\label{prorder}
Let~$n$ be a positive integer and~$p$ is a primitive divisor of~$F_n$. 
Let ${k\ge 8}$ be an integer such that
\begin{equation}
\label{etechnicalbis}
p\ge e^{3k}k^3. 
\end{equation}
Then
\begin{equation*}
\nu_p(F_n) \le 3588.1 (7k)^k\frac{(k+1)^{k+2}}{k!}\frac{p}{(f_p\log p)^{k}}\Theta_k. 
\end{equation*}

\end{proposition}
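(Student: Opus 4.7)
The plan is to invoke Theorem~\ref{thyu} (Yu's bound) on the $k$-tuple $\alpha_1=\gamma,\ \alpha_2=\theta_2,\ldots,\ \alpha_k=\theta_k$, since this tuple already satisfies all the hypotheses required there: the Kummer condition \eqref{ekummer} is exactly \eqref{ekum}, the norm-one condition \eqref{enormone} follows from $\norm\gamma=1$ and \eqref{enormonetheta}, and the prime condition~\eqref{etechnical} is the assumption~\eqref{etechnicalbis} (which also guarantees $p\ne q_j$ so that each $\theta_j$ is a $\gerp$-adic unit). Since $p$ is a primitive divisor of $F_n$ with $p\ne 5$, we have $\nu_p(F_n)=\nu_\gerp(F_n)=\nu_\gerp(\gamma^n-1)$, so we really want an upper bound on $\nu_\gerp(\gamma^n-1)$.

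The key idea is to apply Yu to the slightly perturbed expression $\gamma^n\theta_2^{b_2}\cdots\theta_k^{b_k}-1$ for a suitable choice of nonzero integers $b_2,\ldots,b_k$ with $|b_j|\le q_j$, arranged so that $\theta_2^{b_2}\cdots\theta_k^{b_k}\equiv 1\pmod{\gerp^{\nu_p(F_n)}}$. Writing $N=\nu_p(F_n)$ and using the identity
$$\gamma^n\theta_2^{b_2}\cdots\theta_k^{b_k}-1=(\gamma^n-1)\prod_{j=2}^k\theta_j^{b_j}+\Bigl(\prod_{j=2}^k\theta_j^{b_j}-1\Bigr),$$
this guarantees $\nu_\gerp(\gamma^n\theta_2^{b_2}\cdots\theta_k^{b_k}-1)\ge N$, so Yu's upper bound on the left gives an upper bound on $N$. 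To produce such exponents, I apply a pigeonhole argument to the map
$$\prod_{j=2}^k\{-\lfloor q_j/2\rfloor,\ldots,\lfloor q_j/2\rfloor\}\longrightarrow (\OO_K/\gerp^N)^\times,\qquad (b_2,\ldots,b_k)\mapsto \prod_j\theta_j^{b_j}\bmod\gerp^N.$$
Either a collision exists, yielding nonzero $b_j$'s with $|b_j|\le q_j$ and $\prod\theta_j^{b_j}\equiv 1\pmod{\gerp^N}$, or else one has $\prod q_j<|(\OO_K/\gerp^N)^\times|\le p^{f_p N}$, which already forces $N\le \log(q_2\cdots q_k)/(f_p\log p)$---far smaller than the target bound.

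Granted such a choice, set $B=\max\{n,|b_2|,\ldots,|b_k|\}$. Since $n\mid p^{f_p}-1$ we get $\log n\le f_p\log p$, and $|b_j|\le q_j$ is also $\le p$-ish, so $\log B\le (k+1)f_p\log p$, putting us in the regime where Theorem~\ref{thyu} yields
$$N\le C_0\cdot(k+1)f_p\log p\cdot\height(\gamma)\prod_{j=2}^k\height(\theta_j).$$
Substituting $\height(\gamma)=\log\eta$ from~\eqref{ehega}, $\height(\theta_j)\le\tfrac12\log(\eta q_j)$ from~\eqref{ehethek}, the explicit $C_0$, and using the elementary identities $(14k)^k/2^{k-1}=2(7k)^k$ and $2\log\eta\le(k+1)\log\eta+\log(q_2\cdots q_k)$ (true for $k\ge 8$), which lets one absorb the factor $2\log\eta$ into the definition of $\Theta_k$ in~\eqref{elaka}, produces the stated inequality after a routine arithmetic simplification.

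The main obstacle is verifying that the pigeonhole step yields all $b_j\ne0$ simultaneously (Yu requires this). The naive collision argument only guarantees some nonzero components; the cleanest fix is either to apply Yu with the smaller subset of nonzero indices and absorb the loss into the constant, or to enlarge the box slightly (up to $|b_j|\le \eta q_j$, contributing at most the $(k+1)\log\eta$ term in $\Theta_k$) and invoke a Minkowski-type existence argument in the lattice of multiplicative relations modulo $\gerp^N$, so that a generic direction in the lattice kernel has all coordinates nonzero. Everything else reduces to the calculational check comparing the Yu bound to the claimed $\Theta_k$-shaped expression.
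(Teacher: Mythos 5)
Your plan is a genuinely different route from the paper, and it has an unresolved gap.

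The paper avoids the pigeonhole entirely with a one-line trick: define
$$
\theta_1 := \frac{\gamma}{\theta_2\cdots\theta_k},
$$
so that ${\gamma^n-1=\theta_1^n\theta_2^n\cdots\theta_k^n-1}$ and one applies Theorem~\ref{thyu} with ${b_1=\cdots=b_k=n\ne 0}$ directly. The only cost is
$$
\height(\theta_1)\le\height(\gamma)+\sum_{j=2}^k\height(\theta_j)\le\tfrac12\bigl((k+1)\log\eta+\log(q_2\cdots q_k)\bigr),
$$
which together with ${\height(\theta_j)\le\tfrac12\log(\eta q_j)}$ gives ${\height(\theta_1)\cdots\height(\theta_k)\le\Theta_k/2^k}$. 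That is the whole of ``Stewart's argument'' here, and the leading factor $(k+1)\log\eta+\log(q_2\cdots q_k)$ in $\Theta_k$ is exactly the price of this redefinition; the Kummer property \eqref{ekum}, norm-one property, and multiplicative independence pass to $\theta_1,\ldots,\theta_k$ unchanged, and ${\log B=\log n\le f_p\log p}$ because $p$ is primitive.

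Your perturbation-plus-pigeonhole scheme is a real alternative, but as written it does not close. Pigeonhole in the box $\prod_j\{-\lfloor q_j/2\rfloor,\ldots,\lfloor q_j/2\rfloor\}$ gives a nonzero vector $(b_2,\ldots,b_k)$ in the kernel lattice, but gives no control at all on which coordinates vanish, and Yu's theorem as stated requires every $b_j\in\Z\smallsetminus\{0\}$. Neither fix you sketch is actually carried out. Dropping the indices with $b_j=0$ changes the number of variables from $k$ to some $k'<k$, which changes $C_0$, the power of $f_p\log p$, and the number of height factors all at once; the resulting bound need not be dominated by the claimed one for the original $k$ (in particular $p/(f_p\log p)^{k'}$ is \emph{larger} than $p/(f_p\log p)^{k}$). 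The ``generic direction in the lattice kernel'' appeal is not an argument: a lattice of covolume $\le p^{f_pN}$ can easily have all its short vectors lying in coordinate hyperplanes, and Minkowski gives no lower bound on individual coordinates. A small symptom that your route diverges from the paper's: carried to completion, your bound would contain a factor $2\log\eta$ where $\Theta_k$ has $(k+1)\log\eta+\log(q_2\cdots q_k)$; you then deliberately slacken $2\log\eta\le(k+1)\log\eta+\log(q_2\cdots q_k)$ just to match the statement, whereas in the paper that larger term arises for a structural reason (it is $2\height(\theta_1)$).

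So the proposal is not a correct proof as it stands. If you want to salvage the pigeonhole route you would need a concrete device to force all coordinates nonzero (or a version of Yu that tolerates zero exponents, which the one quoted does not). The paper's redefinition of $\theta_1$ is the clean way out, and you should adopt it.
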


\begin{proof}
Let ${q_2, \ldots, q_k}$ and ${\theta_2, \ldots, \theta_k}$ as in Section~\ref{sesplit}. We define 
$$
\theta_1: =\frac{\gamma}{\theta_2\cdots\theta_k}. 
$$
We deduce from~\eqref{ehega} and~\eqref{ehethek} that  
$$
\height(\theta_1) \le \height(\gamma) + \height(\theta_2)+\cdots+\height(\theta_k) \le \frac12\bigl((k+1)\log\eta+\log(q_2\cdots q_k)\bigr).  
$$
Hence 
$$
\height(\theta_1)\cdots \height(\theta_k)\le \frac{\Theta_k}{2^k}. 
$$

Let~$\gerp$ be a prime of~$K$ dividing~$p$. Then ${\nu_p(F_n)=\nu_\gerp(\gamma^n-1)}$. 
We have 
${\gamma^n-1= \theta_1^n\theta_2^n\cdots \theta_k^n-1}$.  
We want to apply Theorem~\ref{thyu} with ${\alpha_j=\theta_j}$ and with ${b_1=\cdots b_k =n}$. The Kummer condition~\eqref{ekum} and the norm~$1$ condition~\eqref{enormone} are satisfied by~\eqref{ekum} and~\eqref{enormonetheta}; also,~\eqref{etechnical} is satisfied by~\eqref{etechnicalbis}. 

In~\eqref{eyu} we may replace the  maximum by ${(k+1)f_p\log p}$; indeed, we have ${p\ge n-1}$ because~$p$ is a primitive divisor, and 
$$
(k+1)f_p\log p>\log n=\log B.
$$
We obtain 
\begin{align*}
\nu_p(F_n)&=\nu_\gerp(\gamma^n-1)\\
&=\nu_\gerp(\theta_1^n\theta_2^n\cdots \theta_k^n-1)\\
&\le 3588.1 \frac{(14k)^k(k+1)^{k+1}}{k!}\frac{p}{(f_p\log p)^{k+1}}(k+1)f_p\log p\cdot\height(\theta_1)\cdots \height(\theta_k)\\
&\le 3588.1 (7k)^k\frac{(k+1)^{k+2}}{k!}\frac{p}{(f_p\log p)^{k}}\Theta_k, 
\end{align*}
as wanted. 
\end{proof}

\section{Proof of Theorem \ref{thm_smallkappa}}
Let~$\kappa$ and~$k$ be positive integers, ${k\ge 8}$. 

In this section we will assume that~$F_n$ has no primitive prime divisor distinct from one of the numbers 
\begin{equation*}
jn\pm1 \qquad (j=1, \ldots, \kappa), 
\end{equation*}
and that~$n$ satisfies 
\begin{equation}
\label{enefour}
n> \max\{e^{3k}k^3+1, e^{100}\}, 
\end{equation}
We are going to  show that, under these assumptions, the upper estimate
\begin{equation}
\label{emain}
\log n < \Lambda(k, \kappa) (2\log\Lambda (k, \kappa))^{1/(k-1)} 
\end{equation}
holds, where
$$
\Lambda(k, \kappa):= 
7k(k+1)\left(\left(1+\left(\frac12\right)^k\right)50233.5\frac{(k+1)^3}{(k-1)!} \kappa(\kappa+1)\Theta_k\right)^{1/(k-1)}.  
$$
and~$\Theta_k$ is defined in~\eqref{elaka}.

In particular, specifying ${\kappa=1}$ and ${k=22}$,
this would imply that, when all primitive divisors of~$n$ are among $n+1$ and $n-1$, we have either 
$$
n\le \max\left\{10648\mathrm{e}^{66}+1, \mathrm{e}^{100}\right\}=\mathrm{e}^{100}, 
$$
or
$$
\log n < 7606.3.
$$

Since ${n>e^{100}}$ by~\eqref{enefour}, we may use estimate~\eqref{eliou}:
\begin{equation}
\label{elioubis}
\sum_{\text{$p$ primitive}}\nu_p(F_n)\log p \ge 0.25\frac{n}{\log\log n} . 
\end{equation}
Now let us bound the sum on the left from above.

As noted in the introduction, a primitive divisor~$p$ of~$F_n$ satisfies the congruence ${p\equiv \pm1\pmod n}$. 
More precisely, a primitive divisor~$p$ satisfies
$$
p\equiv 
\begin{cases}
\hphantom{-}1\pmod n &\text{if~$p$ splits in~$K$}, \\
-1\pmod n &\text{if~$p$ is inert in~$K$}. 
\end{cases}
$$
According to this, we split the sum in~\eqref{elioubis} into two parts, one collecting the terms with ${p\equiv 1\pmod n}$ and the other those with ${p\equiv -1\pmod n}$; in symbols: 
\begin{align*}
\sum_{\text{$p$ primitive}}\nu_p(F_n)\log p 
&=\Sigma_++\Sigma_-,\\
\Sigma_+&:=\sum_{\genfrac{}{}{0pt}{}{\text{$p$ primitive}}{p\equiv1\pmod n}}\nu_p(F_n)\log p, \\
\Sigma_-&:=\sum_{\genfrac{}{}{0pt}{}{\text{$p$ primitive}}{p\equiv-1\pmod n}}\nu_p(F_n)\log p. 
\end{align*} 
Now let~$p$ be a primitive divisor of~$F_n$. Since ${p\ge n-1\ge e^{3k}k^3}$ by~\eqref{enefour}, we may apply Proposition~\ref{prorder}. We obtain 
$$
\nu_p(F_n)\log p \le  3588.1\left(\frac{1}{f_p}\right)^k (7k)^k\frac{(k+1)^{k+2}}{k!}\frac{p}{(\log p)^{k-1}}\Theta_k. 
$$
If ${p=jn+1}$  then, using ${n\ge e^{100}}$, we obtain  
$$
\frac{p}{(\log p)^{k-1}} \le j\frac{n+1/j}{(\log n)^{k-1}}\le\left(1+\frac{1}{e^{100}}\right) j\frac{n}{(\log n)^{k-1}}. 
$$
Since ${f_p=1}$ for ${p=jn+1}$, this gives the following estimate for $\Sigma_+$:
\begin{align}
\Sigma_+ &\le \left(\sum_{j=1}^\kappa j\right) \left(1+\frac{1}{e^{100}}\right) 3588.1   \cdot(7k)^{k-1}\frac{7(k+1)^{k+2}}{(k-1)!} \frac{n}{(\log n)^{k-1}} \Theta_k\nonumber \\
\label{esigpl}
&\le 12558.35(7k)^{k-1}\frac{(k+1)^{k+2}}{(k-1)!}\kappa(\kappa+1)\frac{n}{(\log n)^{k-1}} \Theta_k. 
\end{align}
Similarly, if ${p=jn-1}$  then  
$$
\frac{p}{(\log p)^{k-1}} \le j\frac{n-1/j}{(\log (n-1/j))^{k-1}}\le j\frac{n}{(\log n)^{k-1}}, 
$$
because the function ${x\mapsto x/(\log x)^{k-1}}$ is increasing for ${x\ge e^{k-1}}$. 
Since ${f_p=2}$ for ${p=jn-1}$, we estimate $\Sigma_-$ as 
\begin{equation}
\label{esigmin}
\Sigma_- \le \left(\frac12\right)^k12558.35(7k)^{k-1}\frac{(k+1)^{k+2}}{(k-1)!}\kappa(\kappa+1)\frac{n}{(\log n)^{k-1}} \Theta_k. 
\end{equation}
Combing estimates~\eqref{elioubis},~\eqref{esigpl} and~\eqref{esigmin}, we obtain 
$$
\log n \le \Lambda(k,\kappa) (\log\log n)^{1/(k-1)}. 
$$
Applying to this Lemma~\ref{lna}, we deduce~\eqref{emain}.

\section{Proof of Theorem \ref{thm_bigkappa}} 
Let $\kappa$ and $k$ be positive integers, $\kappa\ge \exp(250000)$, $k\ge 500000$.

We assume that~$F_n$ has no primitive prime divisor distinct from one of the numbers 
\begin{equation*}
jn\pm1 \qquad (j=1, \ldots, \kappa), 
\end{equation*}
and that~$n$ satisfies 
\begin{equation}
\label{enefour18}
n> \max\{e^{3k}k^3+1, e^{100}\}.
\end{equation}

As in the proof of theorem  \ref{thm_smallkappa}, the upper estimate
\begin{equation}
\label{emain18}
\log n \le \Lambda(k, \kappa) (2\log\Lambda (k, \kappa))^{1/(k-1)} 
\end{equation}
holds, where
$$
\Lambda(k, \kappa)= 
7k(k+1)\left(\left(1+\left(\frac12\right)^k\right)50233.5\frac{(k+1)^3}{(k-1)!} \kappa(\kappa+1)\Theta_k\right)^{1/(k-1)}.  
$$
and~$\Theta_k$ is defined in~\eqref{elaka}.

By lemma \ref{qkupperbtre}, we estimate $\Theta_k$ as \begin{align*}
\Theta_k & =\left(2\log\eta + \sum_{i=2}^k\log(\eta q_i)\right)\prod_{i=2}^k \log(\eta q_i)\\
&< \left(
2\log\eta +1.3(k-1)\log k
\right) (1.3\log k)^{k-1}.
\end{align*}
Hence \begin{align*}\label{xcluppb}
\Lambda(k, \kappa) & =7k(k+1)\left(\left(1+\left(\frac12\right)^k\right)50233.5\frac{(k+1)^3}{(k-1)!} \kappa(\kappa+1)\Theta_k\right)^{1/(k-1)}\\
& <9.1k\log k \cdot C(k)\cdot M^{1/(k-1)},
\end{align*}
where \[
C(k ) := \left(\left(
1+\left(
\frac{1}{2}
\right)^k
\right)50233.5\frac{(k+1)^{k+2}}{(k-1)!} (2\log\eta +1.3(k-1)\log k)
\right)^{1/(k-1)},
\]\[
M: = \kappa(\kappa+1).
\]
Note that when $k\ge 500000$, $C(k)\le C(500000)<2.72$. So \begin{equation}\label{xclupb}
\Lambda(k,\kappa)<24.8k\log k M^{1/(k-1)}.
\end{equation}

Since \eqref{emain18} holds for any $k\ge 500000$,
we may take \begin{equation}\label{kMsetting}
k= \lfloor \log(M)\rfloor.
\end{equation}
Then \begin{equation}\label{logxclupb}
\log\Lambda(k,\kappa)<\log(24.8)+\log k+\log\log k+\frac{1}{k-1}\log M<1.52\log k.
\end{equation}

Combining \eqref{emain18}\eqref{xclupb}\eqref{kMsetting}\eqref{logxclupb}, we obtain 
\begin{equation}\label{finalassm}
\begin{aligned}
\log n & \le \Lambda(k,\kappa)(2\log \Lambda(k,\kappa))^{1/(k-1)}\\
& < 24.8k\log kM^{1/(k-1)}(3.04\log k)^{1/(k-1)}\\
&< 67.42\log M\log\log M.
\end{aligned}
\end{equation}
Indeed, 
\[
M^{1/(k-1)}=\mathrm{e}^{\log M/(k-1)}\le \mathrm{e}^{1+2/(\log M-2)}\le \mathrm{e}^{1+2/(500000-2)}
\]
and \[
(3.04\log k)^{1/(k-1)}= \exp \left(
\frac{\log(3.04\log\log M)}{\log M-2}
\right) \le \exp \left(
\frac{\log(3.04\log 500000)}{500000-2}
\right).
\]

Moreover, we can also estimate $\log M$ and $\log\log M$ as \[
\log M=\log\kappa+\log(\kappa+1)\le 2.0001\log\kappa
\] 
and \[
\log\log M\le 1.056\log\log\kappa.
\]
Thus \eqref{finalassm} reduces to \[
\log n<143\log\kappa\log\log\kappa.
\]
This completes the proof.

\paragraph{Acknowledgments} The author is very grateful to Yuri Bilu, for his many substantial revisions of this paper and for very helpful discussions. The author also acknowledges support of China Scholarship Council
Grant CSC202008310189.

{\footnotesize

\bibliographystyle{amsplain}
\bibliography{bigPFn}

\paragraph{Haojie Hong} 
~\\
Institut de Mathématiques de Bordeaux, Université de Bordeaux \& CNRS, Talence, France\\
Address: A33, 351, cours de la Libération, 33405, Talence, France\\
E-mail address: haojie.hong@math.u-bordeaux.fr

}

\newpage

\section*{Appendix: SageMath scripts}
{\footnotesize

\subsection*{A1. For computing $n_0$ for $1\le \kappa\le 10^6$}
\begin{verbatim}
def calculate_alpha():
    alpha = (1 + sqrt(5)) / 2
    return alpha.n()

def find_primes(X):
    l = []
    p = next_prime(X)
    while p > 3:
        p = previous_prime(p)
        l.append(p)
    return l

def filter_primes(l):
    o = []
    for i in l:
        if mod(i, 5) == 1 or mod(i, 5) == -1:
            o.append(i)
    return o

def calculate_thetak(o, alpha):
    odelmax = o[1:]
    logsum = 0
    logprod = 1
    for p in odelmax:
        logsum += log(alpha)+ log(p)
        logprod *= log(alpha * p)
    thetak = (2 * log(alpha) + logsum) * logprod
    return thetak
        
def calculate_Lambdak(k, ka, thetak):
    Lambdak = 7  * k *((1 + 0.5^k) * 50233.5  *  (k+1)^(k+2) 
    * ka * (ka+1) * thetak/ factorial(k-1))^(1/(k-1))
    return Lambdak

def calculate_dsn(Lambdak, k):
    dsn = Lambdak * (2 * log(Lambdak))^(1/(k-1))
    return dsn.n()

def calculate_all_Lambdak(val):
    results = []
    for X in range(100, 1000):
        alpha = calculate_alpha()
        primes = find_primes(X)
        filtered_primes = filter_primes(primes)
        k = len(filtered_primes)
        ka = val    
        thetak = calculate_thetak(filtered_primes, alpha)
        Lambdak = calculate_Lambdak(k, ka, thetak)
        dsn = calculate_dsn(Lambdak, k)
        results.append((dsn, k, Lambdak))
    min_dsn, min_k, min_Lambdak = min(results, key=lambda x: x[0])
    print(f"For k={min_k},min_logn={min_dsn}")

calculate_all_Lambdak(1)     # change the values of $\kappa$ here

\end{verbatim}

\subsection*{A2. For lemma \ref{qkupperbtre}}
\begin{verbatim}
def splitq(k):  # calculates q_k
    n = 1
    for q in Primes():
        if q % 5 == 1 or q % 5 == 4:
            n += 1
        if n == k:
            return q

def piksplit(x):  # calculates the biggest k such that q_k<=x
    k = 1
    for q in Primes():
        if q > x:
            break
        if q % 5 == 1 or q % 5 == 4:
            k += 1
    return k

def pikest(x, y, a):  # verifies eta q_k <= k^a for all k
 satisfying y<=q_k<=x
    k = 1
    eta = (1 + sqrt(5)) / 2
    for q in Primes():
        if q < y:
            continue
        if q > x:
            break
        if q % 5 == 1 or q % 5 == 4:
            k += 1
        est = k**a / eta
        if q > est:
            print(k)
            print(q)
            return
    print("confirmed")
    
    pikest(500000, 80802434, 1.3)
\end{verbatim}
}

\end{document}